\newtheorem{thm}{Theorem}
\newtheorem{lem}{Lemma}
\newcommand{\cA}{{\cal A}}
\newcommand{\BB}{{\cal B}}
\newcommand{\eps}{{\varepsilon}}
\newcommand{\whp}{{\bf whp}\ }
\DeclareMathOperator{\Bin}{Bin}
\DeclareMathOperator{\Prob}{Pr}
\DeclareMathOperator{\E}{\mathbb{E}}
\begin{document}
\date{}
\title{Component sizes in the supercritical percolation on the binary cube}
\author{Michael Krivelevich\thanks{
School of Mathematical Sciences,
Tel Aviv University, Tel Aviv 6997801, Israel.
Email: {\tt krivelev@tauex.tau.ac.il}.  }
}
\vspace{-2cm}
\maketitle
\begin{abstract}
We present a relatively short and self-contained proof of the classical result on component sizes in the supercritical percolation on the high dimensional binary cube, due to Ajtai, Koml\'os and Szemer\'edi (1982) and to Bollob\'as, Kohayakawa and \L uczak (1992).
\end{abstract}

The purpose of this expository note is to present a fairly short and essentially self-contained proof of the classical result of Ajtai, Koml\'os and Szemer\'edi \cite{AKS82} and of Bollob\'as, Kohayakawa and \L uczak \cite{BKL92} about typical component sizes in the supercritical percolation on the binary cube. The argument relies on several beautiful ideas presented in these two papers.

Recall that the $d$-dimensional binary cube $Q^d$ is defined as follows: its vertex set is $V=\{0,1\}^d$, and two vertices $\bar{x},\bar{y}\in V$ are connected by an edge in $Q^d$ if they differ in exactly one coordinate. Thus, $Q^d$ is a $d$-regular graph on $n:=2^d$ vertices. Throughout this note we assume that $d$ is an asymptotic parameter tending to infinity.

For $p=p(d)\in [0,1]$, form a random subgraph $Q^d_p$ of $Q^d$ by retaining every edge of $Q^d$ independently and with probability $p$.

We start with the much easier to handle subcritical case; peculiarly enough, this case will turn out to be useful for our argument in the supercritical regime. This case is immediately settled by the following general statement.

\begin{thm}\label{th0}
Let $\eps>0$ be a small enough constant. Let $G$ be a graph on $n$ vertices of maximum degree $d\ge 3$. Let $p=\frac{1-\eps}{d-1}$, and form a random subgraph $G_p$ of $G$ by retaining every edge of $G$ independently and with probability $p$. Then \whp all connected components of $G_p$ are of size at most $\frac{9\ln n}{\eps^2}$.
\end{thm}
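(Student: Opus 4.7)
The approach is a standard BFS exploration of a single component, coupled with stochastic domination by a binomial, followed by Chernoff and a union bound over starting vertices. Fix $v\in V(G)$ and expose the component $C(v)$ of $v$ in $G_p$ by breadth-first search, querying each potential edge of $G$ at most once. When the root is processed, we make at most $d$ edge-queries; when any later vertex is processed, we make at most $d-1$ queries, since the edge joining it to its BFS-parent has already been exposed (and was a success). Therefore, in order to discover $k$ distinct vertices, BFS must obtain at least $k-1$ successes among at most $N:=k(d-1)+1$ independent Bernoulli$(p)$ trials, which yields
\[
\Prob\bigl[|C(v)|\ge k\bigr]\;\le\;\Prob\bigl[\Bin(N,p)\ge k-1\bigr].
\]

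Now take $k:=\lceil 9\ln n/\eps^2\rceil$. Since $\mu:=Np\le (1-\eps)k+1$, this is the probability that a $\Bin(N,p)$ variable exceeds its mean by a factor $(k-1)/\mu\approx 1/(1-\eps)$, i.e.\ with relative deviation $\delta\approx\eps/(1-\eps)$. The standard multiplicative Chernoff bound
\[
\Prob\bigl[\Bin(N,p)\ge (1+\delta)\mu\bigr]\;\le\;\exp\!\left(-\frac{\delta^2\mu}{3}\right)
\]
then yields, for $\eps$ sufficiently small and $n$ large enough,
\[
\Prob\bigl[|C(v)|\ge k\bigr]\;\le\;\exp\!\left(-\frac{\eps^2 k}{3(1-\eps)}\bigl(1-o(1)\bigr)\right)\;\le\;n^{-3}.
\]
A union bound over the $n$ choices of $v$ then shows \whp no component of $G_p$ has size at least $k$, which is the conclusion.

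I do not anticipate any genuine obstacle, since the cube structure plays no role at all, only the maximum-degree bound. The only routine bookkeeping is checking that the explicit constant $9$ suffices: one factor $3$ comes from the Chernoff denominator, another factor $3$ absorbs the union bound over starting vertices, and the remaining slack covers the lower-order terms $(1-\eps)^{-1}$, the $+1$ in $N$, and the $k-1$ vs.\ $k$ discrepancy in the target success count.
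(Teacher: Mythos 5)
Your proposal is correct and follows essentially the same argument as the paper: a BFS exploration in which discovering $k$ vertices forces at least $k-1$ successes among at most $k(d-1)+1$ Bernoulli$(p)$ queries, followed by a Chernoff bound and a union bound. The only cosmetic difference is that you union over the $n$ starting vertices, while the paper runs one global BFS and unions over intervals of its bit sequence; both give the same conclusion with the same constant.
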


\noindent{\bf Remark.} In the above statement we purportedly put the edge probability at $p=\frac{1-\eps}{d-1}$ instead of $p=\frac{1-\eps}{d}$, to make this statement more applicable to the case of constant degree $d$, where one expects the critical probability $p^*$ to be close to $1/(d-1)$. For constant $\eps$ and growing $d$, the difference between $1/(d-1)$ and $1/d$ is insignificant.

\begin{proof}
Let $m=|E(G)|\le \frac{dn}{2}<n^2$. We run the Breadth First Search (BFS) Algorithm on $G$, feeding it with the sequence of random bits $\bar{X}=(X_i)_{i=1}^m$, where $X_i$'s are independent Bernoulli$(p)$ random variables. Whenever the algorithm queries the existence of the $i$-th random edge (in its count of exposed edges) $e\in E(G)$, we reach to the bit $X_i$ and assume that $e\in E(G_p)$ if and only if $X_i=1$.

Suppose towards a contradiction that $G_p$ has a component $K$ with more than $k$ vertices, the value of $k$ will be chosen later. Consider the moment in the algorithm's execution where it is in the midst of revealing $K$ and has just discovered the $(k+1)$-th vertex of $K$. In the phase of discovering $K$ up until this moment, only the edges of $G$ incident to the set $K_0$ of the first $k$ discovered vertices of $K$ have been queried, and $k-1$ edges of $G_p$ induced by this set of vertices have been revealed, implying that the number of edges of $G$ spanned by $K_0$ satisfies: $e_G(K_0)\ge k-1$. Recalling our assumption on the maximum degree of $G$, we conclude that the number of edges of $G$ queried so far in this phase is at most
$$
k\cdot d - e_G(K_0)\le kd-(k-1)=k(d-1)+1\,.
$$
It follows that the sequence $\bar{X}$ contains $k(d-1)+1$ consecutive bits $X_i$, out of which at least $k$ are equal to 1. For a given interval of length $k(d-1)+1$, the probability of having at least $k$ ones is
$$
\Prob\left[\Bin\left(k(d-1)+1,\frac{1-\eps}{d-1}\right)\ge k\right]\le e^{-\frac{\eps^2k}{4}}
$$
by the standard Chernoff-type bounds. Union bounding, we have that the probability of having an interval of length $k(d-1)+1$ in $[m]$ with at least $k$ ones is at most $m\cdot  e^{-\frac{\eps^2k}{4}}$. Taking $k=\frac{9\ln n}{\eps^2}$, we see that \whp $\bar{X}$ has no such interval, meaning that \whp $G_p$ has no component larger than $k$.
\end{proof}

\medskip

We now proceed to the main part of this note, the supercritical regime.
For $c>1$, define $y=y(c)$ to be the unique solution in $(0, 1)$ of the equation
\begin{equation}\label{eq1}
y=1-\exp\{-cy\}\,.
\end{equation}
Note that for $c=1+\epsilon$ and $\eps>0$ small enough, we can estimate $y=(1+o_{\eps}(1))2\eps$.

The following is a fundamental statement on the typical component sizes in the random subcube $Q^d_p$ for the supercritical case $p=\frac{c}{d}$, $c>1$ a constant.

\begin{thm}\label{th1}\cite{AKS82,BKL92}
Let $c>1$ be a  constant, and let $p=\frac{c}{d}$. Form a random subgraph $Q^d_p$ of the $d$-dimensional binary cube $Q^d$ by retaining every edge of $Q^d$ independently and with probability $p$. Then, \whp the graph $Q^d_p$ has a connected component $L_1$ whose size is asymptotic to $yn$, where $y=y(c)$ is as defined in (\ref{eq1}), and all remaining connected components $L_i,i\ge 2$, satisfy: $|L_i|\le \frac{d}{c-1-\ln c}$.
\end{thm}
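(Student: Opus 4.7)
My plan combines a branching-process dichotomy for the component sizes of $Q^d_p$ with a two-round sprinkling argument for uniqueness of the giant, with Theorem~\ref{th0} invoked on a ``dual'' subcritical process for cleanup. For the dichotomy, set $k_0 := d/(c - 1 - \ln c)$ and $k_1 := n/d^{2}$. The technical heart is the tail bound
\[
\Prob\bigl[\,|C_v(Q^d_p)| = k\,\bigr] \le \frac{C}{k^{3/2}}\,\exp\!\bigl(-k\,(c - 1 - \ln c)\bigr), \qquad k_0 \le k \le k_1,
\]
which I would prove by comparing the BFS from $v$ to a Galton--Watson tree with $\Bin(d-1, p)$ offspring distribution (back-edges in the cube only reduce effective branching, giving stochastic domination from above), applying the cycle lemma to get $\Prob[|\mathrm{GW}|=k] = \tfrac{1}{k}\Prob[\Bin(k(d-1), p) = k-1]$, and then converting the binomial point probability via Stirling into the exponential rate $I(c) := c - 1 - \ln c$. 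For $k \le \sqrt n$ the BFS on $Q^d$ is collision-free whp, so BFS and GW agree in distribution; for $\sqrt n \le k \le k_1$ an isoperimetric tree-counting bound using $|E_{Q^d}(S, \bar S)| \ge |S|(d - \log_2|S|)$ does the work. A Markov-type count then gives $\sum_{k \in [k_0, k_1]} (n/k)\,\Prob[|C_v|=k] \le n\, e^{-k_0 I(c)}/k_0^{5/2} \cdot O(1) = (2/e)^d \cdot o(1) = o(1)$, so whp every component of $Q^d_p$ has size $\le k_0$ or $\ge k_1$, which already gives $|L_i| \le d/(c - 1 - \ln c)$ for $i \ge 2$ once uniqueness of the giant is in place.

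To establish uniqueness, realize $Q^d_p$ as the independent union $Q^d_{p_1} \cup Q^d_{p_2}$ with $p_1 = c_1/d$, $c_1 = c - \eta(d) > 1$ for slowly vanishing $\eta$, and $p_2 = \Theta(\eta/d)$ chosen so that $(1-p_1)(1-p_2) = 1 - p$. The BFS-to-GW comparison also yields $\Prob[|C_v(Q^d_{p_1})| \ge k_0(c_1)] = y(c_1) + o(1)$, and Azuma's inequality on the edge-exposure martingale (a single edge-flip changes the large-vertex count by $O(d)$) concentrates this to $|\{v : |C_v(Q^d_{p_1})| \ge k_1\}| = (y(c_1) + o(1))n$ whp. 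Conditioning on $Q^d_{p_1}$, its large components form a collection $\mathcal L$ of at most $n/k_1 = d^2$ sets; for any bipartition of $\mathcal L$ into $(A, B)$ with $|A|,|B| \ge k_1$, Harper's inequality combined with an accounting for boundary edges absorbed into small $Q^d_{p_1}$-components (where Theorem~\ref{th0} enters, applied to the subcritical ``dual'' process with effective parameter $c_1(1-y(c_1)) < 1$) gives $|E_{Q^d}(A, B)| \gg 1/p_2$. A union bound over the at most $2^{d^2}$ bipartitions then merges all of $\mathcal L$ into a single giant $Q^d_p$-component of size $(y(c_1) + o(1))n$. Taking $\eta(d) \to 0$ sends $c_1 \to c$ and $y(c_1) \to y$, finishing the proof.

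The main obstacle is the dichotomy tail bound: obtaining the \emph{sharp} exponential rate $I(c)$ uniformly over $k \in [k_0, k_1]$ requires carefully balancing the BFS-to-Galton--Watson coupling at short depths with an isoperimetric tree-counting argument at larger depths, where collisions in $Q^d$ start to spoil the naive branching-process approximation. The remaining ingredients (Azuma concentration, sprinkling via isoperimetry, Theorem~\ref{th0} for subcritical cleanup) are relatively routine once the sharp dichotomy is in place.
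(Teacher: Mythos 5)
Your overall architecture (branching process for the density of vertices in large components, concentration, sprinkling for uniqueness, Theorem~\ref{th0} for cleanup) is in the right spirit, but two of your key steps have genuine gaps. First, the dichotomy tail bound $\Prob[|C_v|=k]\le Ck^{-3/2}e^{-kI(c)}$ uniformly for $k$ up to $k_1=n/d^2$ cannot be obtained by the methods you describe. Stochastic domination of the exploration by a Galton--Watson tree controls upper tails, not point probabilities, and in the supercritical regime $\Prob[|C_v|\ge k]\approx y$ is bounded away from $0$, so no exponential decay can come from that comparison; the decay must come from the event that the whole edge boundary of the component is closed. In $Q^d$ Harper only guarantees $k(d-\log_2 k)$ boundary edges, so the first-moment bound is of the form $\bigl(ed\,p\bigr)^{k-1}(1-p)^{k(d-\log_2 k)}\approx\bigl(ec\,e^{-c+c\log_2 k/d}\bigr)^k$, which is summable only while $\log_2 k/d<(c-1-\ln c)/c$, i.e.\ for $k\le 2^{\alpha d}$ with a small constant $\alpha$ --- nowhere near $n/d^2$, and with a rate strictly worse than $I(c)$. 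Your fallback ``BFS is collision-free whp for $k\le\sqrt n$'' is also false: a connected $k$-set in $Q^d$ can span up to $\tfrac{k}{2}\log_2 k$ cube edges, so for $k=2^{d/2}$ the expected number of open back-edges in the exploration is $\Theta(k)$. This is precisely why the paper only establishes a gap between $d/(c-1-\ln c)$ and $d^{t}$ (polynomial in $d$) and never attempts a first-moment argument at exponential scales; components of size $\ge d^t$ are then merged by sprinkling rather than shown to be of size $\ge n/d^2$ directly.

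Second, in your uniqueness step Harper bounds $e_{Q^d}(A,\bar A)$, not $e_{Q^d}(A,B)$: a priori every boundary edge of $A$ could land in the dust (the $(1-y)n$ vertices outside all large components), so $e_{Q^d}(A,B)$ could even be zero, and your proposed fix --- ``accounting for boundary edges absorbed into small components'' via Theorem~\ref{th0} applied to a dual subcritical process with parameter $c_1(1-y(c_1))$ --- is not an available argument: Theorem~\ref{th0} says nothing about how the dust sits around the large components, and the duality heuristic has no justified cube analogue here. The paper resolves exactly this difficulty with a separate lemma (every vertex of $Q^d$ is within distance two of $W_1$, proved via many disjoint subcubes), which allows it to extend $A'$--$B'$ crossing edges, guaranteed by Harper for a genuine partition of $V(Q^d)$, into $\Omega(\ell d^{27})$ edge-disjoint paths of length at most five and to sprinkle along paths with $p_2=d^{-5}$. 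Two further points: an edge flip changes the number of vertices in components of size $\ge k_1$ by up to $2k_1=2n/d^2$, not $O(d)$, so Azuma with your threshold is vacuous (the paper uses the threshold $d^t$ exactly to keep the Lipschitz constant polynomial in $d$); and Theorem~\ref{th0}'s actual role in the paper is different from yours --- it is applied to an auxiliary graph whose vertices are the small components of $G_1$, showing that sprinkling merges them only into clusters of $O(d)$ components, after which the small--medium gap lemma applied to $Q^d_p$ itself gives $|L_i|\le\frac{d}{c-1-\ln c}$.
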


%\medskip
%\noindent{\bf Tools}
%\medskip

We will need the following facts.

\begin{lem}\label{le1}
For a constant $c>1$ and $d\rightarrow\infty$, the probability of survival of the Galton-Watson tree with offspring distribution $\Bin(d,\frac{c}{d})$ is asymptotic to $y$, where $y=y(c)$ is defined by (\ref{eq1}).
\end{lem}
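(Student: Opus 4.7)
The plan is to invoke the classical theory of Galton--Watson processes: the survival probability of a GW tree with offspring generating function $f$ of mean greater than $1$ equals $1-q$, where $q$ is the unique fixed point of $f$ in $[0,1)$, and then analyze how this fixed point behaves as $d\to\infty$. For $\xi\sim\Bin(d,c/d)$ the generating function is
$$
f_d(s)=\E[s^\xi]=\Bigl(1+\tfrac{c(s-1)}{d}\Bigr)^d,
$$
and I write $q_d$ for its fixed point in $[0,1)$; the goal is to prove $q_d\to 1-y$.

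First I would pass to a limiting generating function. Using $\ln(1+x)=x+O(x^2)$ uniformly for $x\in[-c/d,0]$, one obtains $f_d(s)=\exp\{-c(1-s)+O(1/d)\}$ uniformly in $s\in[0,1]$, so $f_d\to g$ uniformly on $[0,1]$, where $g(s)=e^{-c(1-s)}$ is the generating function of $\mathrm{Poisson}(c)$. The equation $g(s)=s$, under the substitution $y=1-s$, becomes exactly $y=1-e^{-cy}$, i.e.\ (\ref{eq1}); moreover, strict convexity of $g$ combined with $g(1)=1$ and $g'(1)=c>1$ shows that $g$ has a unique fixed point $q^*$ in $[0,1)$, namely $q^*=1-y$, and that $g(s)-s>0$ on $[0,q^*)$ while $g(s)-s<0$ on $(q^*,1)$.

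Finally I would transfer this picture to $f_d$. Fix any small $\delta>0$ with $q^*+\delta<1$. Then $g(q^*-\delta)-(q^*-\delta)>0$ and $g(q^*+\delta)-(q^*+\delta)<0$ strictly; by uniform convergence, the same strict inequalities hold with $f_d$ in place of $g$ for all $d$ large enough, so by the intermediate value theorem $f_d$ has a fixed point in $(q^*-\delta,q^*+\delta)$. Strict convexity of $f_d$ together with $f_d(1)=1$ and $f_d'(1)=c>1$ shows that its fixed point in $[0,1)$ is unique, so it must be $q_d$. Hence $q_d\to q^*=1-y$, and the survival probability $1-q_d$ tends to $y$. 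The only step requiring any care is this continuity-of-fixed-points argument; everything else is standard Galton--Watson theory and a direct Taylor expansion.
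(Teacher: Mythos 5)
Your argument is correct. Note that the paper does not actually prove this lemma --- it simply cites it as standard (Theorem 4.3.12 of Durrett), so what you have written is a self-contained version of the standard route: survival probability as one minus the smallest fixed point of the offspring generating function, uniform convergence of $f_d(s)=\bigl(1+\tfrac{c(s-1)}{d}\bigr)^d$ to the Poisson$(c)$ generating function $e^{-c(1-s)}$ on $[0,1]$, and a continuity-of-fixed-points argument using strict convexity, the sign change at $q^*\pm\delta$, and the intermediate value theorem. All the steps check out (the only implicit hypotheses of the classical fixed-point characterization, namely mean $c>1$ and $\Pr[\xi=1]\ne 1$, clearly hold for $\Bin(d,c/d)$), and the transfer step you flag as the delicate one is handled properly; so your proof is a valid, slightly more explicit substitute for the citation.
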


This is very standard, see, e.g., Theorem 4.3.12 of \cite{Dur19} for a proof.

The next statement is the famed edge isoperimetric inequality for the binary cube due to Harper (1964).
\begin{lem}\label{le2}\cite{Har64}
Let $S\subset V(Q^d)$, $|S|\le 2^{d-1}$. Then the number of edges $e_{Q_d}(S,\bar{S})$ between $S$ and its complement $\bar{S}$ in $Q^d$ satisfies: $e_{Q^d}(S,\bar{S})\ge |S|(d-\log_2|S|)$. In particular, $e_{Q^d}(S,\bar{S})\ge |S|$.
\end{lem}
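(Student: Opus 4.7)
The plan is to reduce Harper's inequality to an upper bound on the number of edges spanned by $S$. Since $Q^d$ is $d$-regular, summing degrees over $S$ gives the handshake identity
$$d|S| = 2e_{Q^d}(S) + e_{Q^d}(S,\bar{S}).$$
Hence it suffices to establish the internal-edge bound
$$e_{Q^d}(S) \le \frac{|S|\log_2|S|}{2}$$
for \emph{every} $S \subseteq V(Q^d)$ (no size restriction, with the convention $0\log 0 = 0$). The desired inequality $e_{Q^d}(S,\bar{S}) \ge |S|(d-\log_2|S|)$ then follows immediately by rearranging the handshake identity, and the ``in particular'' statement drops out because $|S| \le 2^{d-1}$ forces $d - \log_2|S| \ge 1$.

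I would prove the internal-edge bound by induction on $d$; the case $d=0$ is vacuous. For the inductive step, split $Q^d$ into the two subcubes $Q_0, Q_1 \cong Q^{d-1}$ determined by the last coordinate, joined by the canonical perfect matching $M$. Setting $S_i = S \cap V(Q_i)$, $s_i = |S_i|$, $s = s_0+s_1 = |S|$, and $t = |\{v \in V(Q^{d-1}) : v0, v1 \in S\}|$ (the number of $M$-edges contained in $S$), one has
$$e_{Q^d}(S) = e_{Q^{d-1}}(S_0) + e_{Q^{d-1}}(S_1) + t.$$
Applying the inductive hypothesis to each $S_i$ and rearranging, the inductive step reduces to showing
$$s_0\log_2 s_0 + s_1\log_2 s_1 + 2t \le s\log_2 s.$$
Writing $\alpha = s_0/s$ (WLOG $\alpha \le 1/2$), the difference of the two sides equals $s\cdot H(\alpha) - 2t$, where $H(\alpha) = -\alpha\log_2\alpha - (1-\alpha)\log_2(1-\alpha)$ is the binary entropy. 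Since $t \le \min(s_0,s_1) = \alpha s$, the step reduces to the one-variable inequality $H(\alpha) \ge 2\alpha$ on $[0,1/2]$.

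The only real obstacle is this last entropy inequality, and it is handled in a line: $H(\alpha) - 2\alpha$ is strictly concave on $(0,1/2)$, since $H''(\alpha) = -\frac{1}{\alpha(1-\alpha)\ln 2} < 0$, and vanishes at both endpoints $\alpha = 0$ and $\alpha = 1/2$, hence is nonnegative on the whole interval. This is precisely the point where the specific geometry of the cube enters the argument: the matching $M$ can contribute at most $\min(s_0,s_1)$ extra internal edges, and this is just tight enough for the convex quantity $s_0\log_2 s_0 + s_1\log_2 s_1$ to absorb the matching term and close the induction.
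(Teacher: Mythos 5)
Your proof is correct. Note, though, that the paper does not prove this lemma at all: it is quoted as Harper's edge-isoperimetric inequality with a citation to \cite{Har64}, so there is no proof in the paper to compare against line by line. What you supply is the classical elementary argument for exactly the weak form stated in the lemma: reduce via the handshake identity $d|S| = 2e_{Q^d}(S) + e_{Q^d}(S,\bar{S})$ to the internal-edge bound $e_{Q^d}(S)\le \tfrac12 |S|\log_2|S|$, and prove the latter by induction on $d$, splitting along one coordinate into two subcubes joined by a perfect matching; the matching contributes at most $\min(s_0,s_1)$ edges, and the induction closes via the entropy inequality $H(\alpha)\ge 2\alpha$ on $[0,1/2]$, which your concavity argument ($H''(\alpha)=-\frac{1}{\alpha(1-\alpha)\ln 2}<0$, equality at $\alpha=0,\tfrac12$) establishes correctly; the identity $s\log_2 s - s_0\log_2 s_0 - s_1\log_2 s_1 = sH(\alpha)$ also checks out. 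Two cosmetic points: the base case $d=0$ is not ``vacuous'' but trivially true (there are no edges), and you should note that equality analysis is not needed here --- your argument proves the stated bound for all $S$, while Harper's full theorem is stronger (it identifies the exact extremal sets, with subcubes attaining equality in the internal-edge bound, as your $|S|=2^k$ check illustrates). Your version is arguably in the spirit of the note, which aims to be self-contained, and the weak form you prove is all that is used in Lemmas \ref{le4n}, \ref{le5} and \ref{le11}.
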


The next lemma bounds from above the number of trees of a given size containing a given vertex in a base graph of bounded degree.
\begin{lem}\label{le3}
Let $G$ be a graph of maximum degree $d$, and let $k>0$ be an integer. Then for every $v\in V(G)$, the number of subtrees of $G$ on $k$ vertices containing $v$ is at most $(ed)^{k-1}$.
\end{lem}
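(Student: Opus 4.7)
My plan is to bound the desired count $N$ by a double-counting argument: count pairs consisting of a labeled tree on $[k]$ and an edge-preserving embedding into $G$, then compare the two ways of counting. This reduces the problem to Cayley's formula for labeled trees.

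Concretely, let $\mathcal{M}$ denote the set of pairs $(T', \phi)$ where $T'$ is a tree on the vertex set $[k]$ and $\phi : [k] \to V(G)$ is an injective map with $\phi(1) = v$ that sends every edge of $T'$ to an edge of $G$. To upper bound $|\mathcal{M}|$, I will fix a labeled tree $T'$, root it at $1$, and enumerate its vertices in BFS order (or any linear extension of the parent relation). Then $\phi(1) = v$ is forced, and each subsequent vertex must be sent to a neighbor in $G$ of the image of its $T'$-parent, yielding at most $d$ choices. Combined with Cayley's count of $k^{k-2}$ labeled trees on $[k]$, this gives
$$
|\mathcal{M}| \;\le\; k^{k-2} \cdot d^{k-1}.
$$
On the other hand, every subtree $T$ of $G$ on $k$ vertices containing $v$ contributes exactly $(k-1)!$ elements to $\mathcal{M}$, one for each bijection $V(T) \to [k]$ sending $v$ to $1$; consequently $|\mathcal{M}| = (k-1)! \cdot N$.

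Combining the two estimates yields
$$
N \;\le\; \frac{k^{k-2} d^{k-1}}{(k-1)!} \;=\; \frac{k^{k-1}}{k!}\, d^{k-1} \;\le\; \frac{e^k}{k}\, d^{k-1} \;=\; \frac{e}{k}(ed)^{k-1},
$$
where the middle inequality uses the elementary bound $k! \ge (k/e)^k$. For $k \ge 3$ the factor $e/k$ is at most $1$, giving the claim; the cases $k = 1, 2$ are immediate since $N = 1$ and $N \le d$ respectively.

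The argument has no serious obstacle. The only subtle point to verify is the bijective correspondence between $\mathcal{M}$ and (subtree, labeling) pairs, namely that each subtree is counted exactly $(k-1)!$ times when we pass to labeled trees via vertex relabelings fixing $v \mapsto 1$. I note in passing that the more naive encoding of subtrees by DFS walks and Dyck paths only yields a bound of the form $(4d)^{k-1}$ via Catalan numbers, so the use of Cayley's formula is essential for the sharper constant $e$.
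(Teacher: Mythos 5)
Your proposal is correct and follows essentially the same route as the paper: double-count pairs of a labeled tree on $[k]$ (via Cayley's formula) and an embedding built parent-by-parent with at most $d$ choices per vertex, then divide by the $(k-1)!$ labelings of each subtree and absorb $k^{k-2}/(k-1)!$ into $e^{k-1}$. If anything, your handling of the final estimate (tracking the factor $e^k/k$ and checking $k=1,2$ separately) is more careful than the paper's terse closing inequality.
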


For completeness, we present its short proof here, as given by Beveridge, Frieze and McDiarmid (\cite{BFM98}, Lemma 2).

\begin{proof}
Denote by ${\cal T}(v,k)$ the set of all subtrees of $G$ of size $k$ rooted at $v$, the quantity in question is then $t(v,k):=|{\cal T}(v,k)|$. Given a tree $T\in {\cal T}(v,k)$, label $v$ with $k$, and choose a labeling $f: V(T)\setminus\{v\}\rightarrow [k-1]$ of the remaining vertices of $T$. Clearly, every $T\in {\cal T}(v,k)$ is in $(k-1)!$ many such pairs $(T,f)$. Furthermore, each such pair defines a unique spanning tree $T'$ of $K_k$, in which $(i,j)$ is an edge of $T'$ if and only if the vertices $x,y\in V(T)$ with $f(x)=i$, $f(y)=j$ are connected by an edge of $T$.

Now, we need to see in how many ways a spanning tree $T'$ of $K_k$ can be obtained under such labelings. Fix, say, a BFS order on $T'$ starting from $k$, and upon reaching vertex $\ell\in [k-1]$ for the first time, define $f^{-1}(\ell)$. For this, we need to allocate a neighbor in $G$ of the preimage of the already embedded father of $\ell$, and this can be done in at most $d$  ways. Hence, using Cayley's formula to count the number of spanning trees $T'$ of $K_k$, we obtain:
$$
\mbox{\# of pairs } (T,f)= t(v,k)\cdot (k-1)! \le d^{k-1}k^{k-2}\,,
$$
implying $t(v,k)\ge \frac{k^{k-2}}{(k-1)!}d^{k-1}$. It is easy to verify that $\frac{k^{k-2}}{(k-1)!}<e$ for every positive integer $k$, and the lemma follows.
\end{proof}

%\medskip
%\noindent{\bf Vertices in large components}
%\medskip

We now proceed to the proof of Theorem \ref{th1}. From now on we assume $c>1$ is a constant. We also assume that $t>0$ is a fixed integer, whose specific value will be set later in the proof.

\begin{lem}\label{le4n}
Set $p=\frac{c}{d}$, and form a random subgraph $Q^d_p$. Then \whp $Q^d_p$ has no components of size between $\frac{d}{c-1-\ln c}$ and $d^t$.
\end{lem}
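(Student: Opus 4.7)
The plan is to apply a first moment argument to the expected number of connected components of size exactly $k$ in $Q_p^d$, and then union bound over all integers $k \in [d/(c-1-\ln c),\, d^t]$. Since this interval contains at most $d^t$ integers (polynomial in $d$), it suffices to prove that for each such $k$, the expected number of components of size $k$ is at most $e^{-\Omega(d)}$.

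For a fixed vertex set $S \subset V(Q^d)$ with $|S|=k$, the event that $S$ is a component of $Q_p^d$ forces (i) $Q^d[S]$ to contain a spanning tree whose $k-1$ edges all lie in $Q_p^d$, and (ii) every edge of $Q^d$ between $S$ and $\overline{S}$ to be absent from $Q_p^d$. These events depend on disjoint edge sets, hence are independent. Summing over $S$ and over spanning trees of $Q^d[S]$, and invoking Lemma \ref{le3} to bound the total number of $k$-vertex subtrees of $Q^d$ by $n(ed)^{k-1}/k$ (sum over the $n$ possible roots, divided by $k$ to remove overcounting), together with Harper's inequality (Lemma \ref{le2}) in the form $e_{Q^d}(S,\overline{S}) \ge k(d - \log_2 k)$ (valid for $k \le 2^{d-1}$, hence certainly for $k \le d^t$ once $d$ is large), I would bound the expected number of components of size $k$ by
$$
\frac{n(ed)^{k-1}}{k}\, p^{k-1} (1-p)^{k(d-\log_2 k)} \;\le\; \frac{n}{k}\,(ec)^{k-1} \exp\!\left(-ck + \frac{ck\log_2 k}{d}\right).
$$

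Writing $\alpha := c - 1 - \ln c > 0$, the identity $(ec)^{k-1}e^{-ck} = e^{-\alpha k - (1+\ln c)}$ reduces the right-hand side to $n \cdot \exp(-\alpha k + O(k \log d / d))$, where the $O$ is uniform over $k \le d^t$ since $t$ is fixed. For $k \ge d/\alpha$, the exponent is at most $-\alpha k(1-o(1)) \le -d(1-o(1))$; combined with $n = 2^d = e^{d\ln 2}$ and $\ln 2 < 1$, each term is at most $e^{-(1-\ln 2-o(1))d} = e^{-\Omega(d)}$. Summing over the at most $d^t$ admissible values of $k$ still yields an $o(1)$ bound on the expected number of components in the forbidden range, so by Markov's inequality whp none exist.

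The only mildly delicate point is ensuring that the $\log_2 k$ correction from Harper's inequality is harmless: for $k \le d^t$ with $t$ fixed, $\log_2 k = O(\log d)$, so the additive term $ck \log_2 k/d$ in the exponent is of order $k\log d / d = o(k)$, comfortably dominated by the leading $-\alpha k$ once $k \ge d/\alpha$. Otherwise this is a textbook first moment calculation, clean thanks to the sharp tree-count bound of Lemma \ref{le3} and the sharp edge-isoperimetric bound of Lemma \ref{le2}.
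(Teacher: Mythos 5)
Your proposal is correct and follows essentially the same route as the paper: a first-moment bound over tree-plus-closed-boundary configurations, using Lemma \ref{le3} for the tree count, Harper's inequality (Lemma \ref{le2}) for the boundary, and the estimate $1-p\le e^{-p}$ to extract the decisive factor $e^{-(c-1-\ln c)k(1-o(1))}$, which beats $n=2^d$ once $k\ge d/(c-1-\ln c)$ because $1-\ln 2>0$. The only (immaterial) difference is bookkeeping: the paper bounds the probability per fixed vertex $v$ and shows it is $o(1/n)$ before a union bound over vertices, while you count unrooted trees globally and apply Markov's inequality.
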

\begin{proof}
Let $v\in V(Q^d)$, and denote by $C(v)$ the connected component of $Q^d_p$ containing $v$.  For $|C(v)|=k$ to happen, some tree $T$ of order $k$ in $Q^d$ containing $v$ should have all of its $k-1$ edges open in $Q^d_p$, yet all edges of $Q^d$ between $V(T)$ and its complement are to stay close in $Q^d_p$. By Lemma \ref{le2}, the number of such edges is at least $k(d-\log_2k)$. Hence, using Lemma \ref{le3} and the union bound, we derive:
\begin{eqnarray*}
\Pr\left[\frac{d}{c-1-\ln c}\le |C(v)|\le d^t\right] &\le& \sum_{k=\frac{d}{c-1-\ln c}}^{d^t} (ed)^{k-1}p^{k-1}(1-p)^{dk-k\log_2k}\\
&\le& \sum_{k=\frac{d}{c-1-\ln c}}^{d^t}
(ed)^{k-1}\left(\frac{c}{d}\right)^{k-1}e^{-\frac{c}{d}(dk-k\log_2k)}
\le \sum_{k=\frac{d}{c-1-\ln c}}^{d^t} \left( ece^{-c+\frac{c\log_2k}{d}}\right)^k\\
&=& \sum_{k=\frac{d}{c-1-\ln c}}^{d^t} \left(e^{-c+1+\ln c+o(1)}\right)^k=o(1/n)\,.
\end{eqnarray*}
Applying the union bound over all $n$ vertices of $Q^d$, we establish the lemma.
\end{proof}

\begin{lem}\label{le5}
Set $p=\frac{c}{d}$, and form a random subgraph $Q^d_p$. Let $v\in V(Q^d)$, and denote by $C(v)$ the connected component of $Q^d_p$ containing $v$. Then:
$$
\Pr[|C(v)|\ge d^t]=(1-o_d(1))y\,,
$$
where $y=y(c)$ is defined by (\ref{eq1}).
\end{lem}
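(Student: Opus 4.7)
My plan is to sandwich $|C(v)|$ between two Galton--Watson processes, then bridge an intermediate-size gap using (a rerun of) the proof of Lemma~\ref{le4n}. For the upper bound, I couple the BFS exploration of $C(v)$ with a Galton--Watson tree $\bar T$ having $\Bin(d,c/d)$ offspring (each processed vertex $u$ has a $\Bin(|F_u|,p)$-distributed number of new children with $|F_u|\le d$), so that $|C(v)|\le|\bar T|$. By Lemma~\ref{le1}, $\Pr[|\bar T|=\infty]=y+o_d(1)$; and since $\bar T$ conditioned on extinction is a subcritical Galton--Watson process (one checks $c(1-y)<1$ for $c>1$ directly from \eqref{eq1}), its total progeny has exponential tails, so $\Pr[d^t\le|\bar T|<\infty]=o_d(1)$. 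Thus $\Pr[|C(v)|\ge d^t]\le y+o_d(1)$.

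For the lower bound, the main technical obstacle is that the natural branching-process coupling is only faithful while the exploration has size $o(d)$, which is far smaller than the target $d^t$; I address this in two steps. In Step~1, choose $K_0=K_0(d)$ with $K_0\to\infty$ and $K_0=o(d)$ (for instance $K_0=\ln d$), and couple the BFS from below by a Galton--Watson tree $\underline T$ with $\Bin(d-K_0,c/d)$ offspring: while the exploration has size at most $K_0$, at most $K_0$ of a processed vertex's $d$ neighbors have been previously touched, so the number of new children stochastically dominates $\Bin(d-K_0,c/d)$, and hence $|\underline T|\ge K_0$ implies $|C(v)|\ge K_0$. The offspring mean of $\underline T$ is $c-o_d(1)$, and the same Poisson limit argument as in Lemma~\ref{le1}'s proof, applied to $\Bin(d-K_0,c/d)$, shows that the survival probability of $\underline T$ equals $y-o_d(1)$; combined with exponential tails on extinction, this yields $\Pr[|\underline T|\ge K_0]=y-o_d(1)$, and hence $\Pr[|C(v)|\ge K_0]\ge y-o_d(1)$. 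In Step~2, I rerun the estimate in the proof of Lemma~\ref{le4n} to obtain $\Pr[|C(v)|=k]\le(ce^{1-c+o_d(1)})^k$ for every $k\le d^t$; since $c-1-\ln c>0$ forces $ce^{1-c}<1$, summing over $k\in[K_0,d^t]$ gives $\Pr[K_0\le|C(v)|<d^t]=O((ce^{1-c})^{K_0})=o_d(1)$. Subtracting, $\Pr[|C(v)|\ge d^t]\ge\Pr[|C(v)|\ge K_0]-\Pr[K_0\le|C(v)|<d^t]\ge y-o_d(1)$, matching the upper bound.
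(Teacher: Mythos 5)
Correct, and essentially the paper's own argument: a branching-process sandwich for the early growth of $C(v)$ (Lemma \ref{le1}, applied to $\Bin(d-o(d),c/d)$ offspring) followed by the tree-count/isoperimetry estimate of Lemma \ref{le4n} to rule out the intermediate range, with your cutoff $K_0=\ln d$ playing the role of the paper's $d^{1/2}$. The only real deviation is that you settle the upper bound inside the branching process, via duality and exponential tails of the subcritical dual progeny (using $ce^{-cy}<1$), where the paper instead (implicitly) reuses the intermediate-range estimate in the cube; both routes are valid, yours just invokes one additional standard fact.
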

\begin{proof}
First, we estimate the probability of $|C(v)|\ge d^{1/2}$. We run the BFS algorithm on $Q^d$, starting from $v$ and feeding it with independent Bernoulli$(p)$ bits, one for each queried edge of $Q^d$. For as long as $|C(v)|\ge d^{1/2}$, every vertex $u\in V(Q^d)$, queried for neighbors outside of the current component $C(v)$, has at least $d-d^{1/2}$ potential neighbors to query. Hence the exploration process can be coupled with the Galton-Watson tree rooted at $v$ with offspring distribution $\Bin(d-d^{1/2},p)$. Since $(d-d^{1/2})p=c-o_d(1)$, by Lemma \ref{le1} the component $C(v)$ grows to $d^{1/2}$ with probability asymptotic to $y$.

Now we estimate $\Pr[d^{1/2}\le |C(v)|\le d^t]$. The argument here is nearly identical to that of Lemma \ref{le4n}, and hence we allow ourselves to be brief. We have:
\begin{eqnarray*}
\Pr[d^{1/2}\le |C(v)|\le d^t] &\le& \sum_{k=d^{1/2}}^{d^t} (ed)^{k-1}p^{k-1}(1-p)^{dk-k\log_2k}
\le \sum_{k=d^{1/2}}^{d^t}
(ed)^{k-1}\left(\frac{c}{d}\right)^{k-1}e^{-\frac{c}{d}(dk-k\log_2k)}\\
&\le& \sum_{k=d^{1/2}}^{d^t} \left( ece^{-c+\frac{c\log_2k}{d}}\right)^k=o(1)\,,
\end{eqnarray*}
since $ece^{-c}<1$ for $c>1$. We thus conclude:
$$
\Pr[|C(v)|\ge d^t]=(1-o_d(1))y\,,
$$
as desired.
\end{proof}

\begin{lem}\label{le6}
Set $p=\frac{c}{d}$, and form a random subgraph $Q^d_p$. Let $W=\{v: |C(v)|\ge d^t\}$. Then {\bf whp},
$$
|W|=(1+o(1))yn\,.
$$
\end{lem}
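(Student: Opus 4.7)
The first moment is immediate from Lemma~\ref{le5} and the vertex-transitivity of $Q^d$: $\E[|W|] = n\cdot\Pr[|C(v_0)|\ge d^t] = (1-o_d(1))\,yn$ for any fixed $v_0$. For concentration around this mean, I plan to apply Azuma--Hoeffding to the edge-exposure martingale $M_i = \E[|W| \mid X_1, \dots, X_i]$, where $X_1, \dots, X_m$ are the independent $\Bin(p)$ indicators of the $m=dn/2$ edges of $Q^d$, exposed in some fixed order.

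The crux of the argument will be the observation that, as a function of the edges, $|W|$ is $(2d^t)$-Lipschitz. Flipping a single edge either merges two distinct components of sizes $s_1, s_2$ into one of size $s_1+s_2$, or splits one component into two pieces of sizes $s_1, s_2$, or does nothing (if the edge lies in a cycle or connects two already-joined vertices). Setting $f(s)=s\cdot\mathbf{1}_{s\ge d^t}$, the change in $|W|$ equals $\pm\bigl[f(s_1+s_2)-f(s_1)-f(s_2)\bigr]$ in the first two cases and $0$ in the third. I would then run a short case analysis on whether each of $s_1$, $s_2$, $s_1+s_2$ meets the threshold $d^t$: two pieces both above $d^t$ (before or after the flip) contribute zero since all their vertices are in $W$ either way, and the only way $|W|$ can shift is when a piece of size $<d^t$ crosses the threshold, which forces the affected component to contain fewer than $2d^t$ vertices.

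Plugging this Lipschitz bound into Azuma--Hoeffding should give, for any fixed $\epsilon>0$,
\begin{equation*}
\Pr\!\left[\,\bigl||W|-\E[|W|]\bigr|\ge \epsilon n\,\right] \;\le\; 2\exp\!\left(-\frac{\epsilon^2 n^2}{2m(2d^t)^2}\right) \;=\; 2\exp\!\left(-\frac{\epsilon^2\cdot 2^d}{4\,d^{2t+1}}\right) \;=\; o(1),
\end{equation*}
since $2^d$ dominates any polynomial in $d$. Combined with the first moment this would yield $|W|=(1+o(1))yn$ \whp.

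The only real obstacle is verifying the Lipschitz bound via the case analysis --- elementary, but the heart of the argument. Its cleanness stems from defining $W$ through a threshold on component sizes: this is exactly what prevents a single edge flip from displacing more than $O(d^t)$ vertices across the threshold, no matter how huge the affected components might actually be.
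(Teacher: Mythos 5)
Your proposal is correct and follows essentially the same route as the paper: first moment via Lemma~\ref{le5} and vertex-transitivity, then concentration via the edge-exposure martingale with Lipschitz constant $2d^t$ and Azuma--Hoeffding. Your case analysis justifying the $2d^t$ bound is sound (the paper simply asserts it), and the choice of deviation $\epsilon n$ versus the paper's $n^{2/3}$ is immaterial.
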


\begin{proof}
By the previous lemma, we have $\E[|W|]=(1+o(1))yn$. In order to argue about concentration, apply edge exposure martingale to $Q^d_p$ (see, e.g., Chapter 7 of \cite{AS16}). Adding or deleting an edge can change the value of $|W|$ by at most $2d^t$. Hence, by the standard edge exposure martingale inequality:
$$
\Pr\left[||W|-\E[|W|]|\ge n^{2/3}\right]\le 2\exp\left\{-\frac{n^{4/3}}{2\cdot \frac{nd}{2}\cdot(2d^t)^2}\right\}= o(1).
$$
The lemma follows.
\end{proof}

%\medskip
%\noindent{\bf Large components are everywhere-dense}
%\medskip

We now argue that typically in $Q^d_p$ ``largish" components are well spread.
\begin{lem}\label{le7n}
Set $p=\frac{c}{d}$, and form a random subgraph $Q^d_p$. Let $W=\{v: |C(v)|\ge d^t\}$. Then \whp every vertex $v\in V(Q^d)$ is at distance at most two (in $Q^d$) from $W$.
\end{lem}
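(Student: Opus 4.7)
The plan is to exhibit, for each vertex $v$, a family of $\binom{J}{2}=\Theta(d^2)$ pairwise disjoint subcubes of $Q^d$, each containing a distance-$2$ vertex of $v$, such that the restrictions of $Q^d_p$ to these subcubes are mutually independent supercritical percolations. Independence across $\Theta(d^2)$ subcubes will then yield $\Pr[B_2(v)\cap W=\emptyset] \le e^{-\Omega(d^2)}$, comfortably beating the union bound over the $n=2^d$ vertices.

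Fix $v \in V(Q^d)$ and set $J := \lfloor d(c-1)/(2c)\rfloor$. For every pair of coordinates $1 \le i<j \le J$, let
$$R_{i,j} := \bigl\{w\in\{0,1\}^d : w_i=1-v_i,\ w_j=1-v_j,\ w_k=v_k \text{ for all } k\in[j]\setminus\{i,j\}\bigr\}.$$
A short case analysis verifies that the sets $R_{i,j}$ are pairwise disjoint; each is an induced subcube of dimension $d-j$, naturally isomorphic to $Q^{d-j}$ via the free coordinates $j+1,\dots,d$; and each contains the distance-$2$ vertex $v^{ij}$ of $v$ obtained by flipping coordinates $i$ and $j$.

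Inside each $R_{i,j}$ I would repeat the argument of Lemma \ref{le5}. The restriction of $Q^d_p$ to the internal edges of $R_{i,j}$ is a percolation on $Q^{d-j}$ whose effective branching parameter $\tilde c_{i,j}:=p(d-j)=c(d-j)/d$ satisfies $\tilde c_{i,j}\ge(c+1)/2>1$ uniformly over $j\le J$, while the dimension $d-j\to\infty$. Writing $\tilde C_{i,j}$ for the component of $v^{ij}$ in this restricted percolation, Lemma \ref{le5}'s argument (with $d$ replaced by $d-j$ and $c$ by $\tilde c_{i,j}$) yields
$$\Pr\bigl[|\tilde C_{i,j}|\ge d^t\bigr] \ge y^* - o(1), \qquad y^*:=y\bigl((c+1)/2\bigr)>0.$$
Since $\tilde C_{i,j}\subseteq C(v^{ij})$, the event $|\tilde C_{i,j}|\ge d^t$ forces $v^{ij}\in W$, and hence $v^{ij}\in B_2(v)\cap W$.

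The internal edge sets of distinct $R_{i,j}$'s are pairwise disjoint (since their vertex sets are), so the events $\{|\tilde C_{i,j}|\ge d^t\}$ are mutually independent. Therefore
$$\Pr[B_2(v)\cap W=\emptyset] \le \prod_{1\le i<j\le J} \Pr\bigl[|\tilde C_{i,j}|<d^t\bigr] \le (1-y^*/2)^{\binom{J}{2}} = e^{-\Omega(d^2)},$$
and a union bound over the $2^d$ choices of $v$ gives $o(1)$. The only creative step is the combinatorial choice of the subcube family $\{R_{i,j}\}$: the $R_{i,j}$ must be simultaneously disjoint (for independence), of dimension a constant fraction of $d$ above $d/c$ (for supercriticality of the restricted percolation), and $\Theta(d^2)$ in number (to beat $2^d$ in the union bound). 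Once this family is identified, the remainder is a direct application of Lemma \ref{le5} together with independence.
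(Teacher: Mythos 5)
Your proposal is correct and follows essentially the same route as the paper: a family of $\Theta(d^2)$ pairwise disjoint subcubes through distance-$2$ neighbors of $v$, each of dimension large enough that the restricted percolation stays supercritical, so that Lemma \ref{le5} plus independence gives failure probability $e^{-\Omega(d^2)}=o(1/n)$, beating the union bound over all $2^d$ vertices. The only (immaterial) difference is that your subcubes $R_{i,j}$ fix just the first $j$ coordinates and hence have varying dimension $d-j$, whereas the paper fixes all of the first $\lfloor\eps d/2\rfloor$ coordinates so its subcubes share a common dimension.
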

\begin{proof}
It is enough to prove that the all-zero vertex $\bar{0}$ is at distance at most two from $W$ with probability $1-o(1/n)$.
Set $\eps=\frac{c-1}{c}$.
Let $I$ be the set of first $\left\lfloor\frac{\eps d}{2}\right\rfloor$ coordinates, and for $i\ne j\in I$, look at the subcube
$$
H_{ij}=\{v\in Q^d: v_i=v_j=1, v_k=0 \mbox{ for all } k\in I\setminus\{i,j\}\}\,.
$$
Let $u_{ij}$ be the vertex with 1 in coordinates $i$ and $j$ and 0 in all other coordinates. Clearly, $u_{ij}$ is at distance two from $\bar{0}$, and $u_{ij}$ belongs to $H_{ij}$.

The subcubes $H_{ij}$ are vertex disjoint and have dimension $d'=d-\left\lfloor\frac{\eps d}{2}\right\rfloor \ge \left(1-\frac{\eps}{2}\right)d$. Performing percolation on $H_{ij}$ with $p=\frac{c}{d}$ and observing that $d'p\ge \left(1-\frac{\eps}{2}\right)dp=\left(1-\frac{\eps}{2}\right)c=\frac{c+1}{2}$, we derive from Lemma~\ref{le5} that $\Pr[|C(u_{ij})|\ge d^t]=(1+o(1))y(d'p)= (1+o(1))y\left(\frac{c+1}{2}\right)\ge \delta$ for some $\delta=\delta(c)>0$. These events are independent for distinct pairs $\{i,j\}$. Hence the probability that $\bar{0}$ is not at distance at most two (in $Q^d$) from $W$ is at most $\Pr\left[\Bin\left(\binom{\left\lfloor\frac{\eps d}{2}\right\rfloor}{2},\delta\right)=0\right]=e^{-\Theta(d^2)}=o(1/n)$.
\end{proof}

%\medskip
%\noindent{\bf Proof of Theorem \ref{th1}}
%\medskip

Now it is time for the final attack. Consider $Q^d_p$, $p=\frac{c}{d}$. We define probabilities $p_1,p_2$ as follows:
$$
p_2=\frac{1}{d^5},\quad 1-p=(1-p_1)(1-p_2)\,,
$$
we have $p_1=\frac{c-o(1)}{d}$. Let $G_1\sim Q^d_{p_1}, G_2\sim Q^d_{p_2}$ be independent random subgraphs of $Q_p$, their union is distributed exactly as $Q^d_p$. Set
$$
t=31\,,
$$
and define
$$
W_1 =\{v\in V(Q^d): |C_{G_1}(v)|\ge d^t\}\,.
$$
By Lemma \ref{le6}, \whp $|W_1|=(1+o(1))yn$. By Lemma \ref{le4n}, \whp components of $G_1$ outside $W_1$ are $O(d)$ in sizes.

We now expose the edges of $G_2$.

\begin{lem}\label{le11}
{\bf Whp} all components of $G_1[W_1]$ merge into a single component in $Q^d_p$.
\end{lem}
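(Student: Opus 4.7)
The plan is a sprinkling on $G_2$, after conditioning on the typical event from Lemmas~\ref{le4n}, \ref{le6}, and \ref{le7n}: $|W_1|=(1+o(1))yn$, every component of $G_1[W_1]$ has size at least $d^t=d^{31}$, so the number $m$ of such components satisfies $m\le n/d^{31}$. For a nontrivial bipartition $\{C_1,\ldots,C_m\}=\mathcal A\sqcup\mathcal B$ with $|V(\mathcal A)|\le|V(\mathcal B)|$, the event that $V(\mathcal A)$ and $V(\mathcal B)$ lie in different components of $G_1\cup G_2$ is equivalent to the existence of a cut $(S,\overline S)$ of $Q^d$ with $V(\mathcal A)\subseteq S$, $V(\mathcal B)\subseteq\overline S$, $S$ a union of $G_1$-components, and no $G_2$-edge crossing it. Harper's inequality (Lemma~\ref{le2}), applied to whichever of $S,\overline S$ has size at most $n/2$ (both of which contain at least $|V(\mathcal A)|$ vertices of $W_1$), gives $e_{Q^d}(S,\overline S)\ge|V(\mathcal A)|$.

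The main obstacle is that the freedom in choosing $S$ is enormous---each small $G_1$-component outside $W_1$ can be placed on either side of the cut---so naively summing $(1-p_2)^{e_{Q^d}(S,\overline S)}$ over all admissible $S$ is hopeless. To handle this, I would appeal to Karger's near-minimum cut bound: in any graph on $N$ vertices, the number of $s$-$t$ cuts of size at most $\alpha K$ (with $K$ the minimum $s$-$t$ cut) is $O(N^{2\alpha})$. Applied in $Q^d$ after contracting $V(\mathcal A)$ and $V(\mathcal B)$ to the source and sink, and using $K\ge|V(\mathcal A)|\ge d^{31}$,
\[
\Pr[\text{this partition is bad}]\le\sum_{\alpha\ge 1}n^{2\alpha}(1-p_2)^{\alpha|V(\mathcal A)|}\le 2\exp\bigl(-|V(\mathcal A)|/(2d^5)\bigr),
\]
since $p_2|V(\mathcal A)|\ge d^{26}$ dwarfs $2d\ln 2$.

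Finally, a union bound over partitions grouped by $k:=|V(\mathcal A)|$: the number with $|V(\mathcal A)|=k$ is at most $\binom{m}{\le k/d^{31}}\le(en/k)^{k/d^{31}}$, and $(1/d^{31})\ln(en/k)=O(d/d^{31})\ll 1/d^{5}$ for all $k\ge d^{31}$, so
\[
\sum_{k\ge d^{31}}(en/k)^{k/d^{31}}\exp\bigl(-k/(2d^5)\bigr)\le\sum_{k\ge d^{31}}\exp\bigl(-\Theta(k/d^5)\bigr)=o(1),
\]
which completes the argument. The generous choice $t=31$ is there to ensure that $p_2 d^{t}=d^{26}$ swallows both the Karger $n^{2\alpha}=\exp(O(\alpha d))$ factor and the partition-counting $\exp(O(k/d^{30}))$ factor by a comfortable margin.
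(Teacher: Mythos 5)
Your reduction of the bad event to the existence of a $Q^d$-cut $(S,\bar S)$ with $V(\mathcal A)\subseteq S$, $V(\mathcal B)\subseteq\bar S$, $S$ a union of $G_1$-components and no $G_2$-edge crossing, is fine, and you correctly identify the crux: the number of admissible cuts is enormous. But the tool you invoke to beat this does not exist. Karger's cut-counting bound ($O(N^{2\alpha})$ cuts of size at most $\alpha$ times the minimum) holds only for the \emph{global} minimum cut; it fails badly for $s$--$t$ cuts (already $k$ internally disjoint $s$--$t$ paths of length two give $2^k$ minimum $s$--$t$ cuts). In the present setting the failure is fatal: starting from a minimum $A$--$B$ separating cut of size $K\ge d^{31}$, moving across it any subset of at most $K/d$ of the $\Theta(n)$ isolated vertices of $G_1$ (which keeps $S$ a union of $G_1$-components and increases the cut by at most $d$ per vertex) produces at least $\binom{\Theta(n)}{K/d}=\exp\{\Theta(d^{31})\}$ distinct admissible cuts of size at most $2K$, while $(1-p_2)^{2K}=\exp\{-\Theta(d^{26})\}$; so not only is the cited counting lemma false, the union bound over cuts crossed by single $G_2$-edges cannot work at all. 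Falling back on the true (global) Karger bound is hopeless as well: the global minimum cut of $Q^d$ after contraction is only about $d$, so your cuts sit at $\alpha\approx d^{30}$ and $n^{2\alpha}$ is useless.

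Note also that you list Lemma~\ref{le7n} among the events you condition on but never use it; it is exactly the ingredient the paper uses to sidestep the cut-counting problem. Since whp every vertex is within distance two of $W_1$, one can \emph{canonically} extend $A,B$ to a partition $(A',B')$ of all of $V(Q^d)$ in which every vertex of $A'$ is within distance two of $A$ and every vertex of $B'$ within distance two of $B$; thus the only union bound needed is over the at most $\binom{n}{\ell}$ partitions of the components of $G_1[W_1]$, not over cuts. Harper's inequality (Lemma~\ref{le2}) applied to $(A',B')$ gives at least $\ell d^{31}$ crossing edges, each extends to an $A$--$B$ path of length at most five, a greedy argument (each edge lies in fewer than $5d^4$ such paths) extracts at least $\ell d^{27}/30$ edge-disjoint paths, and each appears in $G_2$ with probability at least $p_2^5=d^{-25}$, giving failure probability $e^{-\ell d^2/30}$ per partition, which beats $\binom{n}{\ell}$. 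Your write-up needs a replacement of this kind for the counting step; as it stands there is a genuine gap.
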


\begin{proof}
The proof follows the pioneering idea of \cite{AKS82}. Recall that the edges of $Q^d$ appear as edges of $G_2$ independently and with probability $p_2=d^{-5}$. By Lemma \ref{le7n}, we can assume that every $v\in V(Q^d)$ is at distance at most two from $W_1$. If the statement does not hold, then we can partition the components of $G_1[W_1]$ into two families $\cA$ and $\BB$ such that there are no paths in $G_2$ between sets $A:=\cup_{C\in\cA} V(C)$ and $B:=\cup_{C\in\BB} V(C)$. Let $s\le n$ be the total number of components in $G_1[W_1]$, and let $\ell\le s/2$ be the number of components in a family with fewer components. Then $|A|,|B|\ge \ell\cdot d^{31}$.

Now, since every vertex of $Q^d$ is at distance at most two from $A\cup B$, we can partition $V(Q^d)$ into sets $A',B'$, where $A\subseteq A'$ and $A'$ contains all vertices of $V(Q^d)\setminus B$ at distance at most two from $A$, and every vertex in $B':=V(Q^d)\setminus A'$ is at distance at most two from $B$. By Harper's edge isoperimetric inequality (Lemma \ref{le2}), $Q^d$ has at least $\min\{|A'|,|B'|\}\ge \ell d^{31}$ edges between $A'$ and $B'$. Since every vertex in $A'$ is at distance at most two from $A$ and every vertex in $B'$ is at distance at most two from $B$, we can extend each edge crossing between $A'$ and $B'$ to a path of length at most five between $A$ and $B$. As every edge of $Q^d$ is contained in less than $5d^4$ paths of length at most five, by applying a simple greedy argument we obtain a family of $\frac{\ell d^{31}}{5\cdot 5d^4+1}\ge \frac{\ell d^{27}}{30}$ edge-disjoint paths of length at most five between $A$ and $B$. The probability that none of these paths is in $G_2$ is at most $(1-p_2^5)^{\frac{\ell d^{27}}{30}}\le e^{-\frac{\ell d^2}{30}}$. Also, $s\le n$, and hence the number of ways to partition the components of $G_1[W_1]$ into $\cA,\BB$ with $\ell$ components in one of the families is at most $\binom{s}{\ell}\le\binom{n}{\ell}$. Thus, by the union bound the probability that the lemma's statement fails is most
$$
\sum_{\ell=1}^{s/2}\binom{s}{\ell}e^{-\frac{\ell d^2}{30}}
\le \sum_{\ell=1}^{s/2}\left(\frac{en}{\ell}\cdot e^{-\frac{d^2}{30}}\right)^{\ell}
= o(1)\,.
$$
\end{proof}

\noindent{\bf Proof of Theorem \ref{th1}.} By Lemma \ref{le6}, \whp $|W_1|=(1+o(1))yn$. By Lemma \ref{le11}, \whp all components in $W_1$ merge into a single component $L_1$, whose size is then at least $(1+o(1))yn$. Let us now look at the situation with components outsize of $W_1$. Define an auxiliary graph $\Gamma$ whose vertices are the components of $G_1$ outside of $W_1$, where two components are connected by an edge in $\Gamma$ if $G_2$ contains at least one edge between them. By Lemma \ref{le4n}, \whp all components of $G_1$ outside of $W_1$ are of size $O(d)$, and thus, using a very crude estimate, there are \whp  $O(d^2)$ edges of $Q^d$ between every pair of connected components. Hence every pair of components is connected by an edge of $\Gamma$ independently and with probability at most  $1-(1-p_2)^{d^2}\le p_2d^2=O(d^{-3})$. By the same reasoning, the maximum degree of the underlying graph of $\Gamma$ is \whp $O(d^2)$. Hence, applying Theorem \ref{th0} we derive that \whp all components of $\Gamma$ are $O(d)$. Recalling that \whp every component of $G_1$ outside of $W_1$ has $O(d)$ vertices, this yields that \whp all components of $Q^d_p$ outside of $W_1$ are $O(d^2)$ in sizes. Invoking Lemma \ref{le4n} once again, we conclude that in fact \whp all such components $L_i$ satisfy: $|L_i|\le \frac{d}{c-1-\ln c}$. Finally, applying Lemma \ref{le6}, this time to $Q^d_p$, we can give a likely upper bound on $|L_1|$: $|L_1|\le (1+o(1))yn$. The theorem follows.
\hfill $\Box$

\medskip

\noindent{\bf Remark.} The same proof, with minimal changes, works also for (some part of) the slightly supercritical regime $p=\frac{1+\eps}{d}$, for $d^{-a}\le \eps=\eps(d)=o_d(1)$, for some constant $a>0$.

\medskip

\noindent{\bf Acknowledgment.} The author wishes to thank his frequent coauthors of recent papers on percolation on finite graphs Sahar Diskin, Joshua Erde and Mihyun Kang for the fruitful and enjoyable cooperation, which spurred this note.


\begin{thebibliography}{9}
\bibitem{AKS82}
M. Ajtai, J. Koml\'os and E. Szemer\'edi,
Largest random component of a $k$-cube,
Combinatorica 2(1982), 1--7.

\bibitem{AS16}
N. Alon and J. H. Spencer,
{\em  The probabilistic method},
4th edition, Wiley, Hoboken, 2016.

\bibitem{BFM98}
A. Beveridge, A. Frieze, and C. McDiarmid,
Random minimum length spanning trees in regular graphs,
Combinatorica 18 (1998), 311--333.

\bibitem{BKL92}
B. Bollob\'as, Y. Kohayakawa and T. \L uczak,
The evolution of random subgraphs of the cube,
Random Structures \and Algorithms 3 (1992), 55--90.

%\bibitem{DEKK22}
%S. Diskin, J. Erde, M. Kang, and M. Krivelevich,
%Percolation on high-dimensional product graphs,
%arXiv:2209.03722, 2022.

\bibitem{Dur19}
R. Durrett,
{\em  Probability: theory and examples},
Cambridge University Press, Cambridge, 2019.

\bibitem{Har64}
L. H. Harper,
Optimal assigments of numbers to vertices,
SIAM J. Appl. Math. 12 (1964), 131--135.

\end{thebibliography}
\end{document}